\documentclass{amsart}

\usepackage{ragged2e}
\usepackage{a4wide}
\usepackage[utf8]{inputenc}
\usepackage{enumerate}

\usepackage{amsmath}
\usepackage{amsfonts}
\usepackage{mathabx}%\widecheck

\usepackage{bbm}
\usepackage{mathrsfs}
\usepackage{amsthm}
\usepackage{verbatim}
\usepackage{upgreek}
\usepackage{relsize}
\usepackage{dsfont}
\usepackage{graphicx}
\usepackage{amssymb}
\usepackage{xcolor}

\newcommand{\id}{\textnormal{id}}
\newcommand{\mc}{\mathcal}

\newcommand{\mrm}{\mathrm}

\newcommand{\mf}{\mathfrak}

\newcommand{\I}{\mathbbm{1}}

\newcommand{\md}{\operatorname{d}\!}
\newcommand{\cst}{\ifmmode \mathrm{C}^* \else $\mathrm{C}^*$\fi}

\newcommand{\NN}{\mathbb{N}}
\newcommand{\RR}{\mathbb{R}}

\newcommand{\GG}{\mathbb{G}}

\newcommand{\wot}{\ifmmode \textsc{wot} \else \textsc{wot}\fi}
\newcommand{\sot}{\ifmmode \textsc{sot} \else \textsc{sot}\fi}
\newcommand{\sots}{\ifmmode \textsc{sot}^* \else \textsc{sot}$^*$\fi}
\newcommand{\ssot}{\ifmmode \sigma\textsc{-sot} \else $\sigma$-\textsc{sot }\fi}
\newcommand{\ssots}{\ifmmode \sigma\textsc{-sot}^* \else $\sigma$-\textsc{sot }$^*$\fi}
\newcommand{\swot}{\ifmmode \sigma\textsc{-wot} \else $\sigma$-\textsc{wot}\fi}

\newcommand{\whG}{\widehat{\GG}}

\newcommand{\oon}{\operatorname}

\DeclareMathOperator{\Irr}{Irr}

\DeclareMathOperator{\Tr}{Tr}

\DeclareMathOperator{\M}{M}

\DeclareMathOperator{\LL}{L}

\DeclareMathOperator{\SU}{SU}

\DeclareMathOperator{\Inn}{Inn}
\DeclareMathOperator{\Aut}{Aut}

\newtheorem{theorem}{Theorem}[section]

\newtheorem{proposition}[theorem]{Proposition}

\theoremstyle{definition}

\numberwithin{equation}{section}

\begin{document}
\title{Scaling automorphisms of compact quantum groups}

\author{Jacek Krajczok}
\address{Vrije Universiteit Brussel\\
Pleinlaan 2\\
1050 Brussels\\
Belgium
}
\email{jacek.krajczok@vub.be}

\thanks{}

\subjclass[2020]{Primary 46L67, Secondary 20G42} 

%46L67 Quantum groups (operator algebraic aspects)
%22D55 Kazhdan’s property (T), the Haagerup property, and generalizations
%43A30 Fourier and Fourier-Stieltjes transforms on nonabelian groups and on semigroups, etc.
%47L25 Operator spaces (= matricially normed spaces) [See also 46L07] in  Linear spaces and algebras of operators

\keywords{Scaling group, inner automorphism, Kac type}

\date{}

\begin{abstract}
In this short article we verify the conjecture proposed by P.~M.~Sołtan and the author, by proving that a second countable compact quantum group whose scaling automorphisms are inner, must be of Kac type.
\end{abstract}

\maketitle

\section{Introduction}

One of the features that distinguish the theory of compact quantum groups from its classical counterpart is the possibility that its Haar integral is non-tracial. This leads to a number of interesting phenomena, such as antipode being unbounded, discrete quantum groups being non-unimodular and taking contragradient representation not being involutive. One also obtains two canonical dynamics: modular automorphism group $(\sigma^h_t)_{t\in\RR}$ provided by the Tomita-Takesaki theory and scaling automorphism group $(\tau_t)_{t\in\RR}$. While some quantum groups are of Kac type, i.e.~have tracial Haar integral (equivalently the corresponding automorphism groups are trivial), there is plenty of non-Kac type examples: $q$-deformations, free unitary or free orthogonal quantum groups \cite{NeshveyevTuset}.

In recent work \cite{KSInvariants} P.~M.~Sołtan, together with the author, studied several invariants associated with a (locally) compact quantum group, canonically built from the modular data. Of particular importance turned out to be the invariant $T^{\tau}_{\Inn}(\GG)$, defined as the set of those $t\in\RR$, for which the scaling automorphism $\tau_t\in \Aut(\LL^{\infty}(\GG))$ is \emph{inner}. For example, it was used in \cite{KSExamples} to prove that for each $\lambda\in \left[0,1\right]$, there is an uncountably many compact quantum groups $\GG$ with $\LL^{\infty}(\GG)$ being an injective factor of type $\oon{III}_\lambda$. A similar invariant appeared in \cite{VaesOuter}.\\

The driving force in \cite{KSInvariants} was \cite[Conjecture 1]{KSInvariants}, stating that if $\GG$ is a second countable\footnote{I.e.~$\mrm{C}(\GG)$ is separable, see also \cite[Lemma 14.6]{KrajczokCoamenability}.} compact quantum group and $T^{\tau}_{\Inn}(\GG)=\RR$, then $\GG$ is of Kac type. An analogous statement holds true in the von Neumann algebra theory: if $\M$ is a von Neumann algebra with separable predual, then its Connes' invariant $T(\M)$ is trivial (all modular automorphisms are inner) if, and only if $\M$ is semifinite \cite[Proposition 27.2]{Stratila}.

Conjecture 1 in \cite{KSInvariants} was verified for several classes of quantum groups, including unitary quantum groups, quantum groups with $2$-dimensional representation $\pi$ satisfying $\dim_q(\pi)>2$ and compact quantum groups dual to type I discrete quantum groups. In this work we establish \cite[Conjecture 1]{KSInvariants} in full generality.\\

\textbf{Theorem A}. Let $\GG$ be a second countable compact quantum group, such that $T^{\tau}_{\oon{Inn}}(\GG)=\RR$. Then $\GG$ is of Kac type.\\

It was shown in \cite[Remark 3.1]{KSInvariants}, with a bicrossed product construction, that the second countability constraint is necessary. Moreover, an example of the quantum ``$az+b$'' group shows that outside the compact setting, it can happen that all scaling automorphisms are inner but nonetheless $\tau\neq \id$ \cite[Section 5.2]{KSInvariants}. Consequently these assumptions cannot be dropped.

Let us also mention that for a compact quantum group $\GG$ and a \emph{particular} time $t$, it can happen that $\tau_t$ is inner, but not implemented by any unitary in $\mrm{C}(\GG)$; in particular not implemented by a group-like unitary. This phenomenon occurs e.g.~for $\SU_q(3)$ \cite[Proposition 4.16]{KSInvariants}. Thus we cannot simply assume that our implementing unitaries are group-likes. A natural strategy that comes to mind, is to try to modify them to become group-like. While in general it is not clear how to perform such an operation, it can be done under the additional assumption that $\whG$ is $1$-i.c.c., i.e.~the relative commutant of $\Delta_{\GG}(\LL^{\infty}(\GG))$ in $\LL^\infty(\GG)\bar\otimes\LL^{\infty}(\GG)$ is trivial \cite[Theorem 6.5]{KSInvariants}.\\

\textbf{Acknowledgements}. This work was partially supported by FWO grant 1246624N. 

\textbf{AI disclosure}. An earlier version of this work was created with the help of ChatGPT 5.6 Sol.

\section{Proof of Theorem A}\label{sec:preliminaries}

We will freely use results from the theory of compact quantum groups \cite{NeshveyevTuset}, following mostly notational conventions from this book. \\

We will establish the following slightly stronger statement. It shows that it is the SOT-continuity of the implementation, rather than second countability of $\GG$, that is crucial.

\begin{proposition}\label{prop1}
Let $\GG$ be a compact quantum group. Assume that there is a group of unitary operators $(u_t)_{t\in \RR}$ in $\LL^{\infty}(\GG)$ which is continuous in SOT and $\tau_t=\oon{Ad}(u_t)|_{\LL^{\infty}(\GG)}$ for $t\in\RR$. Then $\GG$ is of Kac type.
\end{proposition}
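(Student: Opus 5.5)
The plan is to use the implementing unitaries to build a faithful normal semifinite weight on $\LL^{\infty}(\GG)$ whose modular group is $\sigma^h_t\circ\tau_{-t}$. I would then show that this weight has an invariance property forcing it to be a multiple of the Haar state $h$, which gives $\tau=\id$.

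\textbf{Step 1: the $u_t$ lie in the centralizer of $h$.} The Haar state is $\tau$-invariant, so $h(u_txu_t^*)=h(x)$ for all $x\in\LL^{\infty}(\GG)$. Putting $x=yu_t$ gives $h(u_ty)=h(yu_t)$, so each $u_t$ lies in the centralizer $\LL^{\infty}(\GG)^{h}$, i.e.\ $\sigma^h_s(u_t)=u_t$. Since $t\mapsto u_t$ is an SOT-continuous one-parameter group, Stone's theorem gives $u_t=e^{itH}$ with $H$ self-adjoint and affiliated with $\LL^{\infty}(\GG)^h$. As a sanity check, $u_t Ju_tJ$ fixes $\Lambda_h(1)$ and implements $\tau_t$, so it coincides with the standard implementation of $\tau_t$ on $\LL^2(\GG)$.

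\textbf{Step 2: a new weight.} Since $H$ commutes with $\sigma^h$, Pedersen--Takesaki theory gives a faithful normal semifinite weight $\psi=h_{e^{-H}}=h(e^{-H/2}\cdot e^{-H/2})$ with modular group
\[
\sigma^\psi_t=\oon{Ad}(u_{-t})\circ\sigma^h_t=\tau_{-t}\circ\sigma^h_t .
\]
Recall that $\sigma^h_t(U^\alpha)=(\id\otimes\rho_\alpha^{it}\cdot\rho_\alpha^{it})U^\alpha$ and $\tau_t(U^\alpha)=(\id\otimes\rho_\alpha^{it}\cdot\rho_\alpha^{-it})U^\alpha$, with $\rho_\alpha$ the Woronowicz matrices. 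On the unitary corepresentation $U^\alpha$ we therefore get $\sigma^\psi_t(U^\alpha)=(\id\otimes\,\cdot\,\rho_\alpha^{2it})U^\alpha$. This is a ``one-sided'' automorphism group of the form $x\mapsto(\id\otimes f_t)\Delta(x)$ for a group of characters $f_t$ of $\mathrm{Pol}(\GG)$. Consequently
\[
\Delta\circ\sigma^\psi_t=(\id\otimes\sigma^\psi_t)\circ\Delta .
\]

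\textbf{Step 3: uniqueness of Haar.} I would now show that $\psi$ is proportional to $h$. Consider the weight $(h\otimes\psi)\Delta$, or alternatively the slice $(\id\otimes\psi)\Delta$, which is formally $\psi(1)h$ by invariance. The relation from Step 2 shows that $\psi\circ(\omega\otimes\id)\Delta$ again has modular group $\sigma^\psi$. Connes' cocycle derivative $(D\psi\circ(\omega\otimes\id)\Delta : D\psi)_t$ is then central-ish, and a standard argument (Kustermans--Vaes style uniqueness of invariant weights, or a direct computation on $\mathrm{Pol}(\GG)$ using that $h$ is the only invariant functional there) yields $\psi=ch$. Once $\psi=ch$, we have $\sigma^\psi=\sigma^h$, hence $\tau_t=\id$ for all $t$, i.e.\ every $\rho_\alpha=1$, and $\GG$ is of Kac type.

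\textbf{Main obstacle.} I expect Step 3 to be the hard part. The weight $\psi$ is typically unbounded, since $e^{-H}$ need not be bounded, so one cannot simply evaluate it on matrix coefficients and invoke the algebraic uniqueness of the Haar functional. What I would try first is to exploit the fact that $H$ is affiliated with the centralizer: cut down by spectral projections $p_n=\I_{[-n,n]}(H)\in\LL^{\infty}(\GG)^h$, where $\psi$ is bounded. Then compare the modular actions on $\Lambda_h(U^\alpha_{ij})$ via $u_tJu_tJ$ and the eigenvalues $(\rho_{\alpha,i}/\rho_{\alpha,j})^{it}$. The aim is to show that $H+JHJ$ must vanish on the matrix-coefficient vectors, which again forces all $\rho_\alpha=1$.
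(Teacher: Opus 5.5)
Your Steps 1 and 2 are correct. Step 1 is exactly the centralizer property the paper also uses, and $\sigma^\psi_t=\tau_{-t}\circ\sigma^h_t$ does act on $U_{ij}$ by $\rho_j^{2it}$. The gap is in Step 3, whose target does not follow from the hypotheses. Take $\GG=G$ a classical compact group and $H\in L^\infty(G)$ real and non-constant. Then $u_t=e^{itH}$ implements $\tau_t=\id$, yet $\psi=h(e^{-H}\,\cdot\,)$ is not a multiple of $h$.

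The underlying reason is that $\psi$ is not invariant, so no uniqueness theorem for Haar weights applies. The identity $(h\otimes\psi)\Delta=\psi(1)h$ comes from invariance of $h$ and says nothing about $\psi$. The relation $\Delta\circ\sigma^\psi_t=(\id\otimes\sigma^\psi_t)\circ\Delta$ concerns only the modular group, and the modular group determines $\psi$ only up to a positive operator affiliated with the centre. Even granting that $\psi\circ(\omega\otimes\id)\Delta$ has modular group $\sigma^\psi$, the cocycle argument only gives $\psi\circ(\omega\otimes\id)\Delta=\psi(z_\omega\,\cdot\,)$ with $z_\omega$ central, which is no contradiction. The correct target would be $\sigma^\psi=\sigma^h$, but that is just a restatement of $\tau=\id$. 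Your fallback, showing that the generator of $u_tJu_tJ$ vanishes on $\Lambda_h(U_{ij})$, also restates the conclusion, since that generator acts there by $\log(\rho_i/\rho_j)$ by construction. So nothing in the proposal actually produces $\rho_\pi=\I$.

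The missing idea is a quantitative identity that plays the KMS condition of $h$ against the commutation relation $f(A)U_{ij}=U_{ij}\,f\bigl(\tfrac{\rho_i}{\rho_j}A\bigr)$, where $A=e^{H}$. The paper uses three facts: KMS with $\sigma^h_{-i}(U_{ij}^*)=(\rho_i\rho_j)^{-1}U_{ij}^*$, that $f(A)$ lies in the centralizer, and unitarity of $U^\pi$. From these it gets, for every bounded Borel $f$,
\[
\sum_{i,j}\rho_j\, h\bigl((f-f\circ T_{ij})(A)\,U_{ij}^*U_{ij}\bigr)=0,\qquad T_{ij}(x)=\tfrac{\rho_i}{\rho_j}x .
\]
Taking $f$ to be truncated logarithms turns the multiplicative shift $T_{ij}$ into uniformly bounded differences converging pointwise to the constants $-\log(\rho_i/\rho_j)$. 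So $A$ disappears in the limit, and the orthogonality relations give $\sum_i(\rho_i^{-1}-\rho_i)\log\rho_i=0$, which forces $\rho_\pi=\I$.

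Your weight $\psi$ would in fact work if $e^{-H}$ were bounded. The KMS condition for $\psi$ gives $\psi(U_{ij}U_{ij}^*)=\rho_j^2\,\psi(U_{ij}^*U_{ij})$, and summing over $i,j$ yields $\Tr(\rho_\pi^2)=\dim(\pi)$. Together with $\Tr(\rho_\pi)=\Tr(\rho_\pi^{-1})$ and Cauchy--Schwarz, this forces $\rho_\pi=\I$. The unboundedness you correctly flag is exactly what the logarithm trick circumvents. Spectral cut-downs do not resolve it by themselves, since $\I_{[-n,n]}(H)\,U_{ij}=U_{ij}\,\I_{[-n,n]}\bigl(H+\log\tfrac{\rho_i}{\rho_j}\bigr)$. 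The truncated functionals therefore satisfy the KMS-type relation only up to boundary terms from shifted spectral projections, and controlling those is precisely the difficulty.
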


Assuming this result, we first deduce Theorem A:

\begin{proof}[Proof of Theorem A]
We assume that each $\tau_t$ is inner and $\LL^2(\GG)$ is separable, hence the theorem of Kallman \cite{KallmanInner} applies, and there is a SOT-continuous group $(u_t)_{t\in \RR}$ of unitary operators in $\LL^{\infty}(\GG)$ which implement $(\tau_t)_{t\in\RR}$. Proposition \ref{prop1} ends the proof.
\end{proof}

\begin{proof}[Proof of Proposition \ref{prop1}]
Fix an arbitrary (class of) irreducible representation $\pi\in \Irr(\GG)$, choose its representative $U^{\pi}$ and an orthonormal basis in the corresponding Hilbert space $\mc{H}_\pi$ in which the operator $\uprho_\pi$ is diagonal with eigenvalues $\uprho_{\pi,1},\dotsc,\uprho_{\pi,\dim(\pi)}$. Recall that $\Tr(\uprho_\pi)=\Tr(\uprho_\pi^{-1})$ is the quantum dimension $\dim_q(\pi)$ of $\pi$. Our aim is to prove $\uprho_\pi=\I$: since $\pi$ is arbitrary, this will show that $\GG$ is of Kac type.

Stone's theorem \cite[Theorem 12.2]{SoltanPrimer}, together with functional calculus \cite[Section 10.2]{SoltanPrimer}, give us a strictly positive, self-adjoint operator $A$ affiliated with $\LL^{\infty}(\GG)$ such that $u_t=A^{it}\,(t\in\RR)$. By \cite[Lemma 3.2]{KSInvariants} we also know that $A^{it}$ is invariant under the modular automorphism group $\sigma^h$ of the Haar integral $h$ \cite{Stratila}, hence $A$ is affiliated with $\LL^\infty(\GG)^{h}=\{x\in\LL^{\infty}(\GG)\mid \sigma^h_s(x)=x\,(s\in\RR)\}$. For every $1\le i,j\le \dim(\pi), t\in\RR$ we have
\[
 A^{it}U^{\pi}_{i,j}A^{-it}=\tau_t(U^{\pi}_{i,j})=\bigl(\tfrac{\uprho_{\pi,i}}{\uprho_{\pi,j}}\bigr)^{it} U^{\pi}_{i,j}\quad\Rightarrow\quad 
A^{it} U^{\pi}_{i,j}=U^{\pi}_{i,j} \bigl(\tfrac{\uprho_{\pi,i}}{\uprho_{\pi,j}} A\bigr)^{it}
\]
and properties of Borel functional calculus imply that
\begin{equation}\label{eq2}
f(A)U^{\pi}_{i,j}=U^{\pi}_{i,j}\, f\circ T_{i,j}(A)\quad(f\in \mf{B}(\RR_{>0})),
\end{equation}
where $\mf{B}(\RR_{>0})$ is the $*$-algebra of bounded, Borel functions on $\RR_{>0}$ and $T_{i,j}\colon \RR_{>0}\ni x\mapsto \tfrac{\uprho_{\pi,i}}{\uprho_{\pi,j}} x\in \RR_{>0}$. Fix $f\in \mf{B}(\RR_{>0})$. Using equation \eqref{eq2}, property that $A$ is affiliated with $\LL^{\infty}(\GG)^h$, $U^{\pi}$ is unitary and the formula $\sigma^h_t(U^{\pi}_{i,j})=(\uprho_{\pi,i}\uprho_{\pi,j})^{it} U^{\pi}_{i,j}$ valid for any $t,i,j$, we calculate
\begin{equation}\begin{split}\label{eq1}
&\quad\;
\sum_{i,j=1}^{\dim(\pi)}
\tfrac{\uprho_{\pi,j}}{\dim_q(\pi)}h\bigl(f\circ T_{i,j}(A)U^{\pi *}_{i,j} U^{\pi}_{i,j}\bigr)=
\sum_{i,j=1}^{\dim(\pi)}
\tfrac{\uprho_{\pi,j}}{\dim_q(\pi)}h\bigl( U^{\pi}_{i,j}
f\circ T_{i,j}(A)\sigma^h_{-i}(U^{\pi *}_{i,j})\bigr)\\
&=
\sum_{i,j=1}^{\dim(\pi)}
\tfrac{1}{\uprho_{\pi,i}\dim_q(\pi)}h\bigl( U^{\pi}_{i,j}
f\circ T_{i,j}(A) U^{\pi *}_{i,j}\bigr)=
\sum_{i,j=1}^{\dim(\pi)}
\tfrac{1}{\uprho_{\pi,i}\dim_q(\pi)}h\bigl( f (A) U^{\pi}_{i,j}
 U^{\pi *}_{i,j}\bigr)\\
 &=
 \sum_{i=1}^{\dim(\pi)}
\tfrac{1}{\uprho_{\pi,i}\dim_q(\pi)}h( f (A)  )=h(f(A)).
\end{split}\end{equation}
On the other hand,
\begin{equation}\label{eq3}
\sum_{i,j=1}^{\dim(\pi)}\tfrac{\uprho_{\pi,j}}{\dim_q(\pi)}
h\bigl(f(A)U^{\pi *}_{i,j} U^{\pi}_{i,j}\bigr)=
\sum_{ j=1}^{\dim(\pi)}\tfrac{\uprho_{\pi,j}}{\dim_q(\pi)}
h(f(A) )=h(f(A)).
\end{equation}
For $k\in \NN$, consider function
\[
X_k\in\mf{B}(\RR_{>0})\colon \quad X_k(t)=\begin{cases}
\log(k^{-1}), & 0<t\le k^{-1},\\
\log(t), & k^{-1} < t \le k,\\
\log(k), & k< t.
\end{cases} 
\]
Combining \eqref{eq1} and \eqref{eq3} we obtain
\begin{equation}\label{eq4}
\sum_{i,j=1}^{\dim(\pi)} \tfrac{\uprho_{\pi,j}}{\dim_q(\pi)} h\bigl( (X_k-X_k\circ T_{i,j})(A) U^{\pi *}_{i,j} U^{\pi}_{i,j}\bigr)=0\quad(k\in\NN).
\end{equation}
Next, observe that
\[
X_k(t)=\log(k^{-1}) + \int_0^{t}Y_k(s)\md s\;(t\in\RR_{>0})\quad\textnormal{where}\quad 
Y_k\in \mf{B}(\RR_{>0})\colon\quad Y_k(s)=\begin{cases}
0, & 0<s\le k^{-1},\\
\tfrac{1}{s}, & k^{-1} < s \le k,\\
0, & k< s.
\end{cases}
\]
With this description, we easily obtain
\[
\bigl|X_k(t)-X_k\circ T_{i,j}(t)\bigr|\le 
\Bigl| \int_{\tfrac{\uprho_{\pi,i}}{\uprho_{\pi,j}}t }^{t} \tfrac{1}{s}\md s\Bigr|=
\bigl|\log(t) - \log\bigl(\tfrac{\uprho_{\pi,i}}{\uprho_{\pi,j}} t\bigr)\bigr|=
\bigl| \log\bigl(\tfrac{\uprho_{\pi,i}}{\uprho_{\pi,j}}  \bigr)\bigr|\quad(k\in\NN,t\in\RR_{>0}),
\]
in particular the family of functions $(X_k-X_k\circ T_{i,j})_{k\in\NN}$ is uniformly bounded and converges pointwise to the constant function $-\log\bigl(\tfrac{\uprho_{\pi,i}}{\uprho_{\pi,j}}\bigr)\I$. Properties of Borel functional calculus \cite[Theorem 10.5]{SoltanPrimer}, normality of $h$, orthogonality relations and equality \eqref{eq4} imply
\[\begin{split}
0&=\lim_{k\to\infty}
\sum_{i,j=1}^{\dim(\pi)} \tfrac{\uprho_{\pi,j}}{\dim_q(\pi)} h\bigl( (X_k-X_k\circ T_{i,j})(A) U^{\pi *}_{i,j} U^{\pi}_{i,j}\bigr)=
\sum_{i,j=1}^{\dim(\pi)} \tfrac{-\uprho_{\pi,j}}{\dim_q(\pi)} \log\bigl(\tfrac{\uprho_{\pi,i}}{\uprho_{\pi,j}}\bigr)
h\bigl( U^{\pi *}_{i,j} U^{\pi}_{i,j}\bigr)\\
&=
\sum_{i,j=1}^{\dim(\pi)} \tfrac{-\uprho_{\pi,j}}{\uprho_{\pi,i}\dim_q(\pi)^2} \log\bigl(\tfrac{\uprho_{\pi,i}}{\uprho_{\pi,j}}\bigr)=
\tfrac{-1}{\dim_q(\pi)}\sum_{i=1}^{\dim(\pi)}\bigl( \tfrac{1}{\uprho_{\pi,i}}-\uprho_{\pi,i}\bigr)\log(\uprho_{\pi,i}).
\end{split}\]
Since the function $g\colon \RR_{>0}\ni t\mapsto (t^{-1}-t)\log(t)\in \RR$ satisfies $g\le 0$ and $g(t)=0\Leftrightarrow t=1$, the above equation gives $\uprho_{\pi,i}=1$ for all $1\le i\le \dim(\pi)$ and ends the proof.
\end{proof}

\bibliographystyle{plain}
\bibliography{bibliografia}

\end{document}